\numberwithin{equation}{section}
\newtheorem{thm}{Theorem}
\newtheorem{lem}{Lemma}
\theoremstyle{definition}
\theoremstyle{remark}
\definecolor{shadecolor}{gray}{0.75}
\def\norm#1.#2.{\lVert#1\rVert_{#2}}
\def\Norm#1.#2.{\bigl\lVert#1\bigr\rVert_{#2}}
\def\NOrm#1.#2.{\Bigl\lVert#1\Bigr\rVert_{#2}}
\def\NORm#1.#2.{\biggl\lVert#1\biggr\rVert_{#2}}
\def\NORM#1.#2.{\Biggl\lVert#1\Biggr\rVert_{#2}}
\begin{document}
\title[Construction of Kakeya-Type Sets]{Probabilistic Construction of Kakeya-Type Sets in $\mathbb{R}^2$ associated to separated sets of directions}

\author[Paul  Hagelstein]{Paul Hagelstein}
\address{P. H.: Department of Mathematics, Baylor University, Waco, Texas 76798}
\email{\href{mailto:paul_hagelstein@baylor.edu}{paul\_hagelstein@baylor.edu}}
\thanks{P. H. is partially supported by a grant from the Simons Foundation (\#521719 to Paul Hagelstein).}

\author[Blanca Radillo-Murguia]{Blanca Radillo-Murguia}
\address{B. R.-M.: Department of Mathematics, Baylor University, Waco, Texas 76798}
\email{\href{mailto:blanca_radillo1@baylor.edu}{blanca\_radillo1@baylor.edu}}

\author{Alexander Stokolos}
\address{A. S.: Department of Mathematical Sciences, Georgia Southern University, Statesboro, Georgia 30460}
\email{\href{mailto:astokolos@GeorgiaSouthern.edu}{astokolos@GeorgiaSouthern.edu}}

\subjclass[2020]{Primary 42B25}
\keywords{maximal functions, differentiation basis}

\maketitle

\begin{abstract}   We provide a condition on a set of directions $\Omega \subset \mathbb{S}^1$ ensuring that the associated directional maximal operator $M_\Omega$ is unbounded on $L^p(\mathbb{R}^2)$ for every $1 \leq p < \infty$. The techniques of proof extend ideas of Bateman and Katz involving probabilistic construction of Kakeya-type sets using sticky maps and Bernoulli percolation.   
\end{abstract}
\mbox{}

\section{Introduction}
This paper addresses problems associated to the $L^p(\mathbb{R}^2)$ boundedness of directional \mbox{maximal} operators acting on measurable functions on $\mathbb{R}^2$.  In particular we provide a condition on a set of directions so that the associated maximal operator is unbounded on $L^p(\mathbb{R}^2)$ for every $1 \leq p < \infty$.   Our research extends the classical work of Nikodym \cite{nikodym} and Busemann and Feller \cite{bf1934},  who constructed  Kakeya-type sets that may be used to provide examples indicating that the directional maximal operator associated to the set of all directions in $\mathbb{S}^{1}$ is unbounded on $L^p(\mathbb{R}^2)$ for every $1 \leq p < \infty$.  It more closely relates, however, to the more recent work of Bateman and Katz \cite{KB} and Bateman \cite{Ba} that indicates how probabilistic techniques  may be used to show that certain directional maximal operators are unbounded on $L^p(\mathbb{R}^2)$ for every $1 \leq p < \infty$.   A particularly noteworthy result in \cite{KB} in this regard due to Bateman and Katz is that if $\Omega$ is the Cantor ternary set in $[0,1]$, then the associated directional maximal operator $M_\Omega$ acting on measurable functions in $\mathbb{R}^2$ is unbounded on $L^p(\mathbb{R}^2)$ for all $1 \leq p < \infty$.   The goal of this paper is to show that the primary ideas of the paper of Bateman and Katz may, with appropriate modifications, yield similar results for  sets that are not lacunary of finite order but satisfy a certain ``separation'' condition.

The paper \cite{Ba} contains a theorem asserting that, if $\Omega$ is a subset of $\mathbb{S}^1$ that is not the union of finitely many sets of finite lacunary order, then the associated maximal operator $M_\Omega$ is not bounded on $L^p(\mathbb{R}^2)$ for any $1 \leq p < \infty$.   We have recently uncovered a subtle quantitative error in the proof of this theorem that is discussed in Section 4 of this paper.    At the present time, to the best of our knowledge, the correctness of the statement of this theorem is unknown.   That being said, extension and modification of techniques in \cite{Ba} do enable us to assert for a wide class of sets of directions $\Omega \subset \mathbb{S}^{1}$ that the associated directional maximal operators $M_\Omega$ are unbounded on $L^p(\mathbb{R}^2)$ for every $1 \leq p < \infty$.


In our paper, we will associate to a given set of directions $\Omega \subset \mathbb{S}^{1}$ a lacunary value $\lambda(\Omega)$.     The definition of the lacunary value $\lambda(\Omega)$ will be very much in the spirit of Bateman's \mbox{paper \cite{Ba}.}    
In addition to the lacunary value $\lambda(\Omega)$ associated to a given set of directions $\Omega$, we will introduce the notion that a set of directions is \emph{$\eta$-separated}.   Loosely speaking, we would say that the ternary Cantor set is $\frac{1}{3}$-separated as the distance between the intervals $[0,\frac{1}{3}]$ and $[ \frac{2}{3}, 1]$ is $\frac{1}{3}$ the length of the ambient interval $[0,1]$, with a similar relation holding for subsequent intervals in the natural construction of the ternary Cantor set.  This positive ratio is crucial in the Bateman and Katz proof that the directional maximal operator associated to a Cantor set is unbounded on $L^p(\mathbb{R}^2)$ for all $1 \leq p < \infty$.   However, as we shall see, this positive ratio does \emph{not} exist for general sets of infinite lacunary value, prohibiting the type of Bernoulli $(\frac{1}{2})$ percolation argument used by Bateman and Katz in \cite{KB} to also be used in the same manner to show that if $\Omega$ is a set of directions in $\mathbb{S}^1$ with $\lambda(\Omega) = \infty$, then $M_\Omega$ is necessarily unbounded on $L^p(\mathbb{R}^2)$ for every $1 \leq p < \infty$. The main result in our paper is that, if $\Omega \subset \mathbb{S}^{1}$ is such that, for some $\eta > 0$, $\Omega$ contains $\eta$-separated subsets $\Omega_{N}$ with $\lambda(\Omega_N) = N$ for every natural number $N$, then $M_\Omega$ is unbounded on  
$L^p(\mathbb{R}^2)$ for every $1 \leq p < \infty$.


The organization of the paper is as follows.   In the second section we will define certain terminology used in the paper, indicating what we mean by the lacunary value $\lambda(\Omega)$ of a set \mbox{$\Omega \subset \mathbb{S}^{1}$}  and the directional maximal operator $M_\Omega$ associated to $\Omega$.   We will also define the $\eta$-separation condition. In this section we will state the main theorem of the paper as well as provide an overview of the structure of the main theorem, indicating, motivated by Bateman's paper, that  there exist positive constants $c_{\eta}$ and $C_{\eta, N}$ so that $\lim_{N \rightarrow \infty}C_{\eta, N} = \infty$ and so that if $\Omega$ contains an $\eta$-separated subset of lacunary value $N$, then there exist sets $K_1$ and $K_2$ in $\mathbb{R}^2$  constructed probabilistically such that $|K_1| \geq C_{\eta, N}|K_2|$ and such that $M_\Omega \chi_{K_2} > c_{\eta}$ on $K_1$.  In this section we will recall lower estimates on the measures of all  $K_1$-type sets as provided by Bateman.     Section \ref{s3} will be devoted to the probabilistic construction of a $K_2$-type set whose measure satisfies a desired upper estimate. In Section \ref{s4} we will provide, given $N$, an example of a set  $\Omega \subset \mathbb{S}^1$ that is $N$-lacunary  but such that, letting $\mathscr{T}_{\Omega}$  be the subset associated to $\Omega$ of the binary tree and defining for each sticky map $\sigma: \mathscr{B}^{h(\mathscr{T}_\Omega)} \rightarrow \mathscr{T}_\Omega$ the associated set $K_\sigma$ as in \cite{Ba}, we have $\sup_{(x,y) \in \mathbb{R}^2 \atop {x \geq 1}} Pr((x,y) \in K_\sigma) = 1 $, where the probability is taken over all such sticky maps.   This provides a counterexample to a step in the proof of  Claim 7(B) of \cite{Ba} which asserted that for all $x \geq 1$ one has $Pr((x,y) \in K_\sigma) \lesssim \frac{1}{N}$.  In \mbox{Section \ref{s5}} we will provide an example of a set $\Omega \subset \mathbb{S}^1$ that, although having infinite lacunary value,  for no $\eta > 0$  contains an $\eta$-separated set of lacunary value $N$  for every finite value of $N$.   This example, however, does not provide a counterexample to Claim 7(B) itself.  Additionally we will make concluding remarks and make suggestions for further research in this area.
\\

We wish to express our gratitude to the referees for their comments and suggestions regarding this paper.
\section{Terminology and Statement of Main Theorem}\label{s2}

Let $\Omega$ be a nonempty subset of $\mathbb{S}^{1}$.   We may associate to $\Omega$ the directional maximal operator $M_\Omega$ acting on measurable functions on $\mathbb{R}^2$ by
$$M_\Omega f(x) := \sup_{x \in R} \frac{1}{|R|} \int_R |f|\;\textup{,}$$
where the supremum is taken over the set of all open rectangles in $\mathbb{R}^2$ \mbox{containing $x$} with an edge of longest length being oriented in one of the points (directions) of $\Omega$.  

For the remainder of the paper we will assume, without loss of generality, that $\Omega \subset \mathbb{S}^{1}$ is such that, for every $\omega \in \Omega$, the line  $\ell \subseteq \mathbb{R}^2$  passing through the origin and $\omega$ intersects the line segment
$$\{(1, u) : 0 \leq u \leq 1\}\;\textup{}$$ at a point $\omega_Q$.   For convenience, we will  identify $\Omega$ with the set $$Q_\Omega := \{u  : (1, u) = \omega_Q \textup{ for some } \omega \in \Omega\}\;$$
or more simply identify $\Omega$ with a subset of $[0,1]$.  (Equivalently, we can assume $\Omega$ lies in the first octant of the plane and we identify $\Omega$ with the tangents of the associated angles.)

We now indicate how we will denote dyadic subintervals of $[0,1]$.  Let $Q_0$ denote the interval $[0,1]$.   Let $Q_{00}, Q_{01}$ denote the two  closed a.e. disjoint dyadic subintervals of $Q_0$ whose union forms $Q_0$, where all the values in $Q_{00}$ are less than or equal to any value in $Q_{01}$.   Continuing recursively, given $  j_i \in \{0,1\} $, for $1 \leq i \leq k$, we let  $Q_{0j_1 \ldots j_k 0}, Q_{0 j_1 \ldots j_k 1}$ denote the two nonoverlapping dyadic subintervals of $Q_{0 j_1 \ldots j_k}$ whose union forms $Q_{0j_1\ldots j_k}$, where all the values in  $Q_{0 j_1\ldots j_k 0}$ are less than or equal to any value in   $Q_{0 j_1\ldots j_k 1}$.   If $u$ is the binary string  $0j_1j_2\ldots j_k$, we may abbreviate the interval $Q_{0j_1j_2\ldots j_k}$ by $Q_u$.  When convenient, we will also let $u = 0j_1j_2\ldots j_k$ denote the interval $[\sum_{i=1}^k 2^{-i}j_i, \sum_{i=1}^k 2^{-i}j_i + 2^{-k}].$ 

We define the binary tree $\mathscr{B}$ to be the graph whose vertex set consists of $0$ and all finite strings of the form $0 a_1 a_2 \ldots a_k$ where each $a_i \in \{0, 1\}$, and whose edge set is the collection of unordered pairs of  vertices of the form $(0, 0 a_1)$ or $(0 a_1 \ldots a_{k-1} , 0 a_1 \ldots a_{k-1} a_k)$.


Given $\Omega \subset \mathbb{S}^{1}$, we define $\mathscr{T}_\Omega$ to be the smallest subtree of $\mathscr{B}$  containing 0 and all of the vertices of the form $0 a_1 a_2 \ldots a_k$ such that $Q_{0 a_1 a_2 \ldots a_k} \cap Q_\Omega \neq \emptyset$.

Let $\mathscr{T}$ be a subtree of $\mathscr{B}$.   Any vertex $v \in \mathscr{T}$ of the form $v = 0 a_1 \ldots a_k$ is said to be of \emph{height} $k$, and we may write $h(v)= k$.  $0 \in \mathscr{T}$ is considered to be of height $0$.   The height of a nonempty tree is the supremum of the heights of its vertices.  If $u \textup{,} v \in \mathscr{T}$, an edge in $\mathscr{B}$ exists connecting $u$ and $v$, and $h(v) = 1 + h(u)$, $u$ is considered to be a \emph{parent} of $v$ and $v$ is considered to be a \emph{child} of $u$.     If $u_j$ is a parent of $u_{j+1}$ for $j = 0, \ldots, k - 1$, then $u_j$ is considered to be an \emph{ancestor} of $u_k$ and $u_k$ is considered to be a \emph{descendant} of $u_j$.  If the vertex $u \in \mathscr{T}$ has two children in $\mathscr{T}$, then the vertex $u$ is considered to \emph{split in $\mathscr{T}$}, and we may also refer to $u$ as a \emph{splitting vertex}.  

If $\mathscr{T}$ is a subtree of $\mathscr{B}$ and $N$ is a natural number, we define $\mathscr{T}^N$ to be the truncation of $\mathscr{T}$ to all of its vertices of height less than or equal to $N$.

A ray $R$ in $\mathscr{T}$ is a (possibly infinite) maximal ordered collection of vertices $v_1, v_2, v_3, \ldots$ in $\mathscr{T}$ such that $h(v_{j+1}) = 1 + h(v_j)$.  It is maximal in the sense that the ray does not terminate at a vertex $v \in \mathscr{T}$ if $v$ has any descendants in $\mathscr{T}$.   If $v \in \mathscr{T}$, the set of rays starting at $v$ of the form $v, v_2, v_3, \ldots$ is labeled by $\mathfrak{R}_{\mathscr{T}}(v)$.

Given a tree $\mathscr{T}$ and a ray $R$ in $\mathscr{T}$, we define the splitting number $\textup{split}(R)$ of  $R$ to be the number (possibly infinite) of vertices that split in $\mathscr{T}$ that lie on $R$.   The splitting number of a vertex $v$  with  respect to a tree $\mathscr{S}$ rooted at $v$ is defined by
$$\textup{split}_\mathscr{S} (v) := \min_{R \in \mathfrak{R}_{\mathscr{S}}(v)} \textup{split}(R)\;.$$  

 The splitting number of a vertex $v$ in a tree $\mathscr{T}$ is defined by
$$\textup{split}(v) := \sup_{\mathscr{S} \subset \mathscr{T}} \textup{split}_\mathscr{S} (v)\;,$$ where the supremum is over all subtrees $\mathscr{S}$ of $\mathscr{T}$ all of whose vertices are of height at least that of $v$.   We define 
$$\textup{split} (\mathscr{T}) := \sup_{v \in \mathscr{T}} \textup{split}(v)\;.$$

Given $\Omega \subset \mathbb{S}^{1}$, we define the \emph{lacunary value} $\lambda(\Omega)$ by
$$\lambda(\Omega) := \textup{split}(\mathscr{T}_\Omega)\;.$$

Although this terminology is motivated by that of Sj\"ogren and  Sj\"olin \cite{sjsj} and Bateman \cite{Ba}, a few words of caution are in order here.  To begin with, the lacunary value does not agree with what is typically considered the lacunary order of a set.   As an example, if $\Omega = \{1/2\}$, then $\lambda(\Omega) = 1$ since $1/2$ has a binary representation of both 100000\ldots \;and \mbox{0111\ldots .}  Similarly, multiple binary representations of numbers of the form $1 / 2^j$ lead us to have that the lacunary value of the set $\{1/2, 1/4, 1/8, \ldots\}$ is 2 although this set is typically considered to have lacunary order 1.   It is for this reason that we refer to a lacunary value of a set as opposed to a lacunary order.   We suppose we could get around this issue by associating to any point in $\Omega$ a single ray, say by choosing a ray that was minimal with respect to a type of dictionary order, but this would create a certain degree of artificiality that we wish to avoid.  At any rate, the lacunary value $\lambda(\Omega)$ that we define agrees with the splitting number $\textup{split} (\mathscr{T}_\Omega)$ defined by Bateman, so our definition would seem to be reasonable. 

Again following terminology in Bateman \cite{Ba}, we state that a tree $\mathscr{T} \subset \mathscr{B}$ is \emph{lacunary of order 0} if $\mathscr{T}$ consists of a single ray containing 0, and that $\mathscr{T}$ is lacunary of order $N$ if all of the splitting vertices of $\mathscr{T}$ lie on a lacunary tree of order $N-1$.   It is understood that when we say that a tree is lacunary of order $N$ (or, more colloquially, the tree is $N$-lacunary) that the tree is not lacunary of an order lower than $N$.

If $\mathscr{P} \subset \mathscr{B}$ is lacunary of order $N$ and of finite height $h(\mathscr{P})$, we say that $\mathscr{P}$ is \emph{pruned} provided every ray in $\mathfrak{R}_{\mathscr{P}}(0)$ contains exactly one vertex $v_j$ that splits in $\mathscr{P}$ such that $\textup{split}_\mathscr{P}(v_j) = j$ for $1 \leq j \leq N$. 

Given $\Omega \subset \mathbb{S}^1$, note the lacunary value $\lambda(\Omega)$ of $\Omega$ satisfies the equality 
$$\lambda(\Omega) = \sup\left\{N : \mathscr{T}_\Omega  \;\textup{contains a lacunary tree of order}\; N\right\}.$$

Let $0 < \eta$.   We say that a tree $\mathscr{T} \subset \mathscr{B}$ is \emph{$\eta$-separated} provided for any splitting vertex $0a_1a_2\ldots a_k$ any two descendants $u$ and $v$ that are splitting vertices and lying on separate halves of the interval $Q_{0a_1a_2\ldots a_k}$ must be such that the  Euclidean distance between the intervals $Q_u$ and $Q_v$ is greater than or equal to $\eta$ times the length of the interval $Q_{0a_1a_2\ldots a_k}$. 
\\

We are now in position to state the main theorem of the paper.

\begin{thm}\label{t1}
Let $\Omega \subset \mathbb{S}^{1}$.   Suppose there exists $\eta > 0$ so that, for every natural number $N$, the tree $\mathscr{T}_\Omega$ contains an $\eta$-separated subtree that is lacunary of order $N$.   Then the maximal operator $M_\Omega$ is unbounded on $L^p(\mathbb{R}^2)$ for every $1 \leq p < \infty$.   
\end{thm}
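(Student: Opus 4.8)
The plan is to deduce unboundedness from a single quantitative lower bound on the operator norm that is permitted to depend on $N$, and then to let $N \to \infty$. Fix $1 \leq p < \infty$ and a natural number $N$. By hypothesis there is an $\eta$-separated subtree of $\mathscr{T}_\Omega$ that is lacunary of order $N$; let $\Omega_N \subset \Omega$ denote the associated set of directions. Since enlarging the set of admissible directions only enlarges the supremum defining the maximal operator, we have the pointwise domination $M_{\Omega_N} f \leq M_\Omega f$ for every measurable $f$, so it suffices to produce, for the smaller operator $M_{\Omega_N}$, a test function whose ratio $\|M_{\Omega_N}\chi_{K_2}\|_{L^p}/\|\chi_{K_2}\|_{L^p}$ is large. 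I would take this test function to be $\chi_{K_2}$, where $K_1, K_2 \subset \mathbb{R}^2$ are the probabilistically constructed sets promised in the introduction, satisfying $|K_1| \geq C_{\eta, N}|K_2|$ and $M_{\Omega_N}\chi_{K_2} > c_\eta$ on $K_1$.

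Granting such a pair, the conclusion is immediate. Since $M_{\Omega_N}\chi_{K_2} > c_\eta$ everywhere on $K_1$,
$$\|M_\Omega \chi_{K_2}\|_{L^p}^p \geq \int_{K_1} \bigl(M_{\Omega_N}\chi_{K_2}\bigr)^p \geq c_\eta^p\,|K_1| \geq c_\eta^p\, C_{\eta,N}\,|K_2| = c_\eta^p\, C_{\eta, N}\,\|\chi_{K_2}\|_{L^p}^p\;.$$
Hence $\|M_\Omega\|_{L^p \to L^p} \geq c_\eta\, C_{\eta, N}^{1/p}$. Because the hypothesis furnishes such a configuration for \emph{every} $N$ with one and the same $\eta$, and because $\lim_{N \to \infty} C_{\eta, N} = \infty$ while $c_\eta > 0$ is fixed, the right-hand side tends to infinity as $N \to \infty$. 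Thus $M_\Omega$ cannot be bounded on $L^p(\mathbb{R}^2)$, and as $p$ was arbitrary the theorem follows.

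The entire substance of the argument is therefore the construction of the pair $(K_1, K_2)$ with the two stated properties. For $K_1$ I would invoke Bateman's lower estimate, recalled in Section~\ref{s2}, on the measure of the set where the maximal function associated to a pruned lacunary tree of order $N$ stays bounded below; this supplies both the set $K_1$ and the threshold $c_\eta$. The set $K_2$ is built in Section~\ref{s3}: starting from the pruned $\eta$-separated tree of order $N$, one assigns to each sticky map $\sigma : \mathscr{B}^{h(\mathscr{T})} \to \mathscr{T}$ a union of tubes $K_\sigma$ and estimates $\Exp\,|K_\sigma|$ over a suitable Bernoulli percolation on the tree, the crux being that $\eta$-separation forces tubes emanating from splitting vertices on opposite halves of a parent interval to be angularly separated by a definite amount, so their overlap is controlled and the expected measure decays in $N$ at the rate needed to make $C_{\eta,N} \to \infty$.

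The main obstacle is precisely this upper measure estimate for $K_2$, not the soft reduction above. As emphasized in the introduction, the naive per-point bound $Pr((x,y) \in K_\sigma) \lesssim 1/N$ implicit in Claim~7(B) of \cite{Ba} is false in general (see Section~\ref{s4}), so the decay cannot be extracted pointwise in $(x,y)$ and then integrated. Instead one must exploit the $\eta$-separation geometrically and globally, tracking how the percolation thins the tree level by level and how the separation prevents the surviving tubes from concentrating, thereby converting the combinatorial thinning on the tree into a genuine gain in Lebesgue measure while keeping $M_{\Omega_N}\chi_{K_2}$ above $c_\eta$ on the comparatively large set $K_1$. Once that quantitative gain is established with $C_{\eta,N} \to \infty$, the proof closes as above.
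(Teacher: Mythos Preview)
Your reduction is essentially the paper's own: fix $N$, pass to an $\eta$-separated $N$-lacunary (pruned) subtree, invoke Bateman's lower bound $|K_{\sigma,1}|\gtrsim \frac{\log N}{N}$ for $K_1$ and the upper bound $|K_{\sigma,2,\eta}|\lesssim_\eta \frac{1}{N}$ of Lemma~\ref{l2} for $K_2$, observe $M_\Omega\chi_{K_2}\gtrsim \eta^2$ on $K_1$, and let $N\to\infty$. Two small points: (i) the slopes used to build $K_\sigma$ are left endpoints of dyadic intervals, not literally elements of $\Omega_N$, so one still needs the one-line observation (made in Section~\ref{s2}) that each such slope is within $2^{-h(\mathscr{P})}$ of a direction in $\Omega$, which is what actually gives $M_\Omega\chi_{K_2}\gtrsim_\eta 1$ on $K_1$; your pointwise domination $M_{\Omega_N}\le M_\Omega$ is correct but does not by itself address this. (ii) The paper applies Bateman's pruning to produce a tree with exactly $2^N$ leaves before applying either lemma; $\eta$-separation survives pruning, and you should say so.

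Your final paragraph, however, inverts the mechanism by which Lemma~\ref{l2} is proved. The counterexample in Section~\ref{s4} shows the per-point bound $\textup{Pr}((x,y)\in K_\sigma)\lesssim \frac{1}{N}$ can fail \emph{without} separation; the whole point of the $\eta$-separation hypothesis is that it \emph{restores} this pointwise bound, and the paper's proof of Lemma~\ref{l2} is explicitly pointwise (``by linearity of expectation, it suffices to show $\textup{Pr}((x,y)\in K_\sigma)\lesssim_\eta \frac{1}{N}$''). The geometric role of $\eta$-separation is not a global overlap estimate on tubes but a local dichotomy: for $x>\frac{1}{2\eta}$, two child splitting intervals $g_{0a_1\ldots a_j0}$ and $g_{0a_1\ldots a_j1}$ cannot both send a line through $(x,y)$ from the \emph{same half} of a parent interval $G^k_{g_{0a_1\ldots a_j}}$. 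Hence at every choosing stage of the sticky map at most one of the two halves is ``right'', and the event $(x,y)\in K_\sigma$ is dominated by survival of a genuine Bernoulli$(\tfrac12)$ percolation on an $\frac{8}{\eta}$-ary tree of height $N-1$ with $\lesssim_\eta 2^j$ vertices at level $j$, which has probability $\lesssim_\eta \frac{1}{N}$. So the argument is pointwise-then-integrate, not ``geometrically and globally'' as you describe.
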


For any natural number $N$, a function $f: \mathscr{B}^N \rightarrow \mathscr{B}^N$ is called a \emph{sticky map} provided $h(f(u)) = h(u)$ for every $u \in \mathscr{B}^N$ and moreover such that $f(u)$ is an ancestor of $f(v)$ whenever $u$ is an ancestor of $v$.
\\

Let $\mathscr{T} \subset \mathscr{B}$ be a tree of finite height whose vertices consist of a collection of vertices $\{v_j\}$, all of height $h(\mathscr{T})$, together with all of the ancestors of these vertices.  To every sticky map $\sigma:\mathscr{B}^{h(\mathscr{T})} \rightarrow \mathscr{T}$   we may associate a set $K_{\sigma} \subset \mathbb{R}^2$ defined as follows. 
\\

 Let $d_{\sigma, 0 j_1 \ldots j_{h(\mathscr{T})}} \in [0,1]$ denote the left-hand endpoint of the interval $Q_{0 k_1 \ldots k_{h(\mathscr{T})}}$, where 
 $$\sigma(0 j_1 j_2 \ldots j_{h(\mathscr{T})}) = 0 k_1 k_2 \ldots k_{h(\mathscr{T})}\;.$$  We let $\rho_{\sigma, 0 j_1  \ldots j_{h(\mathscr{T})}}$ denote the interior of the union of all lines in $\mathbb{R}^2$ passing through the interval $\{0\} \times Q_{0 j_1\ldots j_{h(\mathscr{T})}}$ oriented in the direction  $(1, d_{\sigma, 0 j_1 \ldots j_{h(\mathscr{T})}})\;.$ We define 
$$K_\sigma = \bigcup_{j_1, \ldots, j_{h(\mathscr{T})} \atop   j_i \in \{0,1\} } \rho_{\sigma, 0 j_1  \ldots j_{h(\mathscr{T})}}\;.$$  We set

$$K_{\sigma, 1} = K_\sigma \cap \left\{(x_1, x_2) \in \mathbb{R}^2: 0 \leq x_1 \leq 1\right\}$$ 
and
$$K_{\sigma, 2, \eta} = K_\sigma \cap \left\{(x_1, x_2) \in \mathbb{R}^2: \frac{1}{2\eta}\leq x_1 \leq \frac{1}{2\eta} + 1\right\}\;.$$

Table \ref{table1} and Figure \ref{fig1}   provide an example of a sticky map $\sigma: \mathscr{B}^4 \rightarrow \mathscr{B}^4$ and the associated set $K_\sigma$.

\begin{table}[ht] \caption{Sticky map $\sigma:  \mathscr{B}^4 \rightarrow \mathscr{B}^4$}\label{table1}
 \centering    
 \begin{tabular}{c c c c}  
\hline\hline                 
$t$&$ \sigma(t)$ & $t$ & $\sigma(t)$ \\ [0.5ex] 
\hline  0000 & 1100 & 1000 & 0000  \\   0001 & 1101& 1001 & 0001  \\ 0010 & 1110 & 1010  & 0011  \\ 0011 & 1111& 1011& 0010\\ 0100 & 1011& 1100& 0100\\ 0101 & 1010 & 1101 & 0100\\  0110 & 1010 & 1110 & 0101 \\ 0111 & 1010 & 1111 & 0100\\[1ex] \hline     \end{tabular} \label{table:nonlin}  \end{table}

\begin{figure}[H]
\centering
\includegraphics[width=0.55\textwidth]{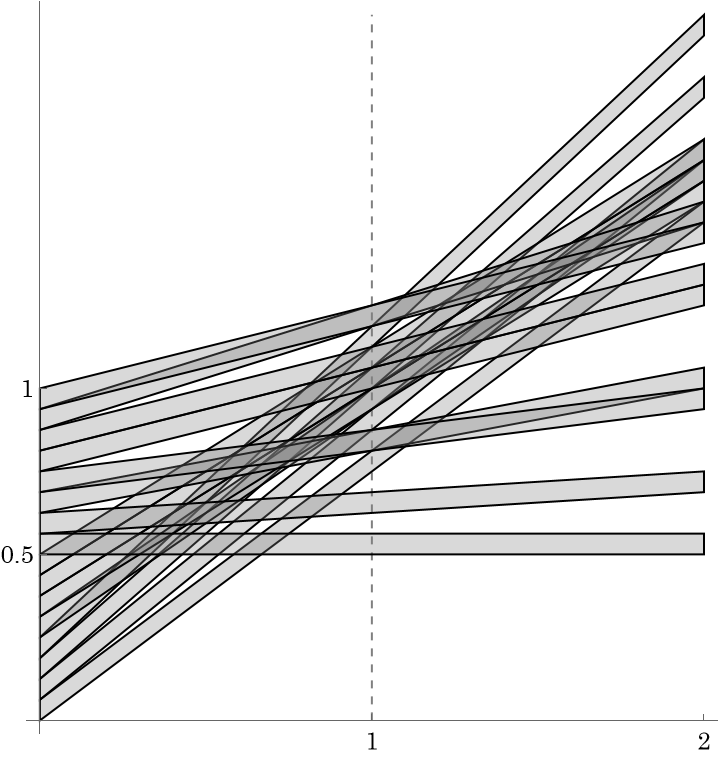}
\caption{ $K_\sigma$ set associated to Table \ref{table1}}
\label{fig1}
\end{figure}

\begin{lem}[Bateman \cite{Ba}]\label{l1}
Suppose $\mathscr{P} \subset  \mathscr{B}$ is a pruned tree that is lacunary of order $N$ and of finite height $h(\mathscr{P})$. Moreover suppose $\mathscr{P}$ contains $2^N$ vertices of height $h(\mathscr{P})$.  Then 
$$|K_{\sigma, 1}| \gtrsim \frac{\log N}{N}\;$$ holds for every sticky map $\sigma: \mathscr{B}^{h(\mathscr{P})} \rightarrow \mathscr{P}$. 
\end{lem}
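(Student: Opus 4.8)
The plan is to reduce the area bound to a slicing computation in the horizontal variable $x_1$, and then to a purely combinatorial estimate on the mutual overlaps of the defining tubes, where the pruned lacunary-order-$N$ structure forces the total overlap to be at most $O(N/\log N)$. Write $h := h(\mathscr{P})$. For $0 \le t \le 1$ the horizontal slice $S_t := \{x_2 : (t,x_2) \in K_{\sigma,1}\}$ is, directly from the definition of $\rho_{\sigma,v}$, the union $\bigcup_v (Q_v + t\,d_{\sigma,v})$ over all length-$h$ strings $v$ of the footprint interval $Q_v$ translated by $t$ times its assigned slope. Since the $2^h$ intervals $Q_v$ tile $[0,1]$, the multiplicity function $f_t(x_2) := \sum_v \chi_{Q_v + t d_{\sigma,v}}(x_2)$ satisfies $\int f_t\,dx_2 = 1$ for every $t$. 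By Cauchy--Schwarz, $|S_t| \ge (\int f_t)^2 / \int f_t^2 = 1/\int f_t^2\,dx_2$, so that
\[
|K_{\sigma,1}| \;=\; \int_0^1 |S_t|\,dt \;\ge\; \int_0^1 \frac{dt}{\int f_t^2\,dx_2}\,.
\]

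Next I would rewrite the denominator as a geometric overlap. Carrying out the $x_2$-integration gives $\int f_t^2\,dx_2 = \sum_{u,v} |(Q_u + t d_u) \cap (Q_v + t d_v)|$, and integrating in $t$ converts this into the total pairwise overlap of the tubes inside the unit strip,
\[
\int_0^1\!\Big(\int f_t^2\,dx_2\Big)dt \;=\; \sum_{u,v}\big|\rho_{\sigma,u}\cap\rho_{\sigma,v}\cap\{0\le x_1\le 1\}\big|\,.
\]
Two tubes of common width $2^{-h}$ whose assigned slopes differ by $\theta$ meet, if at all, in area $\lesssim \min(2^{-h},\,2^{-2h}/\theta)$. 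I expect the cleanest route to the exact rate to be a generation-by-generation decomposition rather than a single convexity step: one shows that in the region of $(t,x_2)$ controlled by the $k$-th split generation at most $O(k)$ tubes can overlap, so that $|S_t| \gtrsim 1/k$ there, while that region contributes a $t$-range of relative size $\sim 1/N$. Summing over the $N$ generations then produces $|K_{\sigma,1}| \gtrsim \frac{1}{N}\sum_{k=1}^{N}\frac{1}{k} = \frac{H_N}{N} \sim \frac{\log N}{N}$; equivalently, one verifies a recursion of the form $g(N) \ge \frac{N-1}{N}g(N-1) + c/N^2$ for the extremal measure $g(N)$, whose solution is $g(N) \gtrsim H_N/N$.

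The heart of the matter, and the step I expect to be the main obstacle, is the overlap count. Here one exploits that $\mathscr{P}$ is pruned and lacunary of order $N$ with $2^N$ leaves: organize the string-pairs $(u,v)$ by the split number $j \in \{1,\dots,N\}$ at which their images $\sigma(u),\sigma(v)$ diverge in $\mathscr{P}$, and use stickiness to count, for each $j$, how many pairs diverge at that level and how their slope gaps $\theta$ are distributed, so that the weighted sum of $1/\theta$ collapses across the $N$ generations into the bound above. The two delicate points are that the geometric scale (the \emph{height}) of a splitting vertex is \emph{not} determined by its split number, so the bookkeeping must track heights and split numbers simultaneously while still extracting the generationwise gain; and that the estimate must hold \emph{uniformly over all} sticky maps $\sigma: \mathscr{B}^{h}\rightarrow \mathscr{P}$, so one must argue that no adversarial choice of $\sigma$ can concentrate the overlaps beyond what the order-$N$ branching of $\mathscr{P}$ permits.
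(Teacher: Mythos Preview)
The paper does not give a proof of this lemma. It is stated as a result of Bateman, and immediately after Lemma~\ref{l2} the text says ``Note Lemma~\ref{l1} is essentially Claim~7A of \cite{Ba}.'' No argument is supplied; the lemma is simply imported from \cite{Ba} and used. So there is no in-paper proof to compare your proposal against.

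As for the proposal itself: the slicing and Cauchy--Schwarz reduction to $\int f_t^2$ is standard and correct, and you are right that the single convexity step (Jensen on $\int_0^1 dt/\int f_t^2$) only delivers $|K_{\sigma,1}|\gtrsim 1/N$; the extra $\log N$ genuinely requires the generation-by-generation refinement you allude to. What you have written, however, is an outline rather than a proof. The two assertions that carry all the weight --- that ``the region controlled by the $k$-th split generation'' occupies a $t$-range of relative size $\sim 1/N$, and that the recursion $g(N)\ge \frac{N-1}{N}g(N-1)+c/N^2$ holds --- are stated but not established, and neither is obvious for an arbitrary pruned tree: as you yourself note, the heights of the splitting vertices are decoupled from their split numbers, so there is no a~priori uniform partition of $[0,1]$ into $N$ comparable $t$-ranges. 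Making this precise is exactly the content of Bateman's Claim~7A, and your write-up does not yet supply it.
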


\begin{lem}\label{l2}
Suppose $\mathscr{P} \subset  \mathscr{B}$ is an $\eta$-separated pruned tree that is lacunary of order $N$ and of finite height $h(\mathscr{P})$.  Moreover suppose $\mathscr{P}$ contains $2^N$ vertices of height $h(\mathscr{P})$.  Then there exists a sticky map \mbox{$\sigma: \mathscr{B}^{h(\mathscr{P})} \rightarrow \mathscr{P}$} such that 
$$|K_{\sigma, 2, \eta}| \lesssim_\eta \frac{1}{N}\;.$$
\end{lem}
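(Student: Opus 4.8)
The plan is to prove the lemma by the probabilistic method: we construct a \emph{random} sticky map $\sigma$ by Bernoulli $(\tfrac12)$ percolation and show that $\Exp_\sigma |K_{\sigma,2,\eta}| \lesssim_\eta \frac1N$, whence some single sticky map satisfies $|K_{\sigma,2,\eta}|\lesssim_\eta\frac1N$. Concretely, send the root of $\mathscr{B}^{h(\mathscr{P})}$ to the root of $\mathscr{P}$ and, recursively, whenever a source vertex $v$ has image $\sigma(v)$ equal to a splitting vertex of $\mathscr{P}$, route each of the two children of $v$ to one of the two children of $\sigma(v)$ by an independent fair coin flip (when $\sigma(v)$ is non-splitting the unique child is forced). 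Since $\mathscr{P}$ is pruned and lacunary of order $N$, every root--leaf path of $\mathscr{P}$ carries exactly $N$ splitting vertices, so for any fixed source leaf $u$ the image $\sigma(u)$ is uniformly distributed over the $2^N$ leaves of $\mathscr{P}$.

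By Tonelli's theorem,
\[ \Exp_\sigma |K_{\sigma,2,\eta}| = \int_{1/(2\eta)}^{1/(2\eta)+1}\int_{\mathbb{R}} \Pr\big[(x_1,x_2)\in K_\sigma\big]\,dx_2\,dx_1 . \]
For $x_1$ in the indicated strip, membership $(x_1,x_2)\in K_\sigma$ forces $x_2\in[0,1+x_1]$, an interval of length $O_\eta(1)$. Hence it suffices to establish the pointwise estimate $\Pr[(x_1,x_2)\in K_\sigma]\lesssim_\eta \frac1N$ uniformly for $x_1\in[\frac1{2\eta},\frac1{2\eta}+1]$; integrating it over the bounded ranges of $x_1$ and $x_2$ then gives $\Exp_\sigma |K_{\sigma,2,\eta}|\lesssim_\eta\frac1N$.

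To analyze the pointwise probability, fix $(x_1,x_2)$ and note that $(x_1,x_2)\in\rho_{\sigma,u}$ exactly when $x_2 - x_1 d_{\sigma,u}\in Q_u$. For each leaf $w$ of $\mathscr{P}$, with slope $d_w=\inf Q_w$, set $p_w = x_2 - x_1 d_w$ and let $u_w$ be the unique source leaf with $p_w\in Q_{u_w}$; then $(x_1,x_2)\in K_\sigma$ if and only if $\sigma(u_w)=w$ for some leaf $w$ with $p_w\in[0,1]$. A single such event has probability $2^{-N}$ by the uniformity above, but the number of geometrically admissible leaves $w$ (those with $d_w\in[\frac{x_2-1}{x_1},\frac{x_2}{x_1}]$) may be comparable to $2^N$, so a first--moment bound yields only $\Pr\lesssim 1$. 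The factor $\frac1N$ must therefore be extracted from the correlations among these events, which we encode as a branching process along the $N$ splitting levels of $\mathscr{P}$: descending from the root, we track the random set of splitting vertices through whose subtree $\sigma$ can still route the designated source leaf consistently with covering $(x_1,x_2)$.

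Here the choice $x_1\approx\frac1{2\eta}$ interacts decisively with $\eta$-separation. When an active splitting vertex with interval of length $\ell$ splits, its two descendant splitting clusters have slopes separated by at least $\eta\ell$, so their pull-backs $x_2-x_1(\cdot)$ are separated by at least $x_1\eta\ell\ge\frac{\ell}{2}$; the two sub-branches thus occupy disjoint portions of the line $\{x_1=0\}$ and cannot reinforce one another. This disjointness, together with the independence of the fresh coin flips introduced at each level, is exactly what keeps the induced branching process from becoming supercritical, and one is led to a recursion of the form $P_k\le P_{k-1}-c_\eta P_{k-1}^2$ for the survival probability $P_k$ through the top $k$ levels, whose solution obeys $P_N\lesssim_\eta\frac1N$ --- the classical survival rate of a critical branching process to depth $N$. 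I expect this criticality estimate to be the main obstacle: controlling the offspring distribution so that the process is no worse than critical is precisely the step whose analogue in \cite{Ba} carries the quantitative error discussed in Section \ref{s4}, for without a separation hypothesis the sheared sub-branches overlap, the per-level branching exceeds one, and the survival probability fails to decay. Making the recursion rigorous --- accounting for the finitely many, non-identically-distributed vertices in each generation and for the distortions coming from the widths $x_1|Q_v|$ of the pull-back intervals --- is the technical crux of the argument.
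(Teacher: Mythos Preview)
Your proposal is correct and follows essentially the same route as the paper: randomize $\sigma$ by independent fair coins at the splitting vertices, use linearity of expectation to reduce to the pointwise bound $\Pr[(x_1,x_2)\in K_\sigma]\lesssim_\eta 1/N$, and extract that bound from the geometric fact that, once $x_1\ge 1/(2\eta)$, the $\eta$-separation forces the two descendant splitting clusters of any splitting vertex to land in \emph{different halves} of the relevant source dyadic interval, so each coin flip kills at least one branch.

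The only noteworthy difference is in how the final ``critical branching'' step is executed. You aim for a direct recursion $P_k\le P_{k-1}-c_\eta P_{k-1}^2$; the paper instead packages the possible source intervals into an explicit $\tfrac{8}{\eta}$-ary tree $\mathscr{G}$ of height $N-1$ (vertices are the dyadic intervals $G^m_{g_{0a_1\ldots a_j}}$ covering the $y$-intercept set $I_{g_{0a_1\ldots a_j}}$), observes that $\mathscr{G}$ has at most $\tfrac{4}{\eta}2^j$ vertices at height $j$, and then quotes a known Bernoulli $(\tfrac12)$ percolation bound (Lyons \cite{lyons1992}, Lyons--Peres \cite{lyonsperes}, Exercise~5.52(a)) to get survival probability $\lesssim_\eta 1/N$. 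This tree construction is precisely the device that handles what you flag as the ``technical crux'' --- the finitely many non-identically-distributed vertices per generation --- and lets the paper avoid deriving the quadratic recursion from scratch. Your approach would work too, but building $\mathscr{G}$ and invoking the percolation literature is cleaner.
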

 

Note Lemma \ref{l1} is essentially Claim 7A of \cite{Ba}.   Moreover, Lemma \ref{l2}, without the separation assumption, corresponds to Claim 7(B) of \cite{Ba}. To prove Theorem \ref{t1} it suffices to prove \mbox{Lemma \ref{l2}.}  To see this, suppose the hypotheses of Theorem \ref{t1} are satisfied.   By Bateman's pruning argument in Section 3 of \cite{Ba},  we have that, given $N > 0$,  there exists  a pruned tree $\mathscr{P} \subset \mathscr{T}_\Omega$ of finite height $h(\mathscr{P})$ with $2^N$ vertices of height $h(\mathscr{P})$ that is lacunary of order $N$.  As $\eta$-separation is preserved under pruning, we have $\mathscr{P}$ is $\eta$-separated.  Lemma \ref{l2} provides the existence of  a sticky map $\sigma: \mathscr{B}^{h(\mathscr{P})} \rightarrow \mathscr{P}$ such that the associated sets $K_{\sigma, 1}$ and $K_{\sigma, 2, \eta}$ satisfy
 $|K_{\sigma, 1}| \gtrsim \frac{\log N}{N}\;$ and $|K_{\sigma, 2, \eta}| \lesssim_\eta \frac{1}{N}\;.$   
Note that the average of $\chi_{K_{\sigma,2,\eta}}$ over any parallelogram $\rho_{\sigma 0j_1\ldots j_h(\mathscr{T})}\cap \{(x,y)  \in \mathbb{R}^2: 0 \leq x \leq \frac{1}{2\eta} + 1\}$ equals $\frac{1}{(\frac{1}{2\eta} + 1)}$.   The slope $d_{\sigma 0j_1\ldots j_h(\mathscr{T})}$ of this parallelogram of width  $2^{-h(\mathscr{T})}$  is within $2^{-h(\mathscr{T})}$ of a direction in $\Omega$, and accordingly is contained in a parallelogram of $\frac{1}{(\frac{1}{2\eta} + 1)}$ times its area but oriented in a direction in $\Omega$.   Hence 
$M_\Omega \chi_{K_{\sigma, 2, \eta}} \gtrsim \eta^2$ on $K_{\sigma, 1}$, and we  have the proof of Theorem \ref{t1}. 

\section{Proof of Lemma \ref{l2}}\label{s3}

\begin{proof}[Proof of Lemma \ref{l2}]  We may assume without loss of generality that $\eta = 2^{-j}$ for some natural number $j$.

Let $(x,y) \in \mathbb{R}^2$ with $\frac{1}{2\eta} < x \leq \frac{1}{2\eta} + 1$.    By linearity of  expectation, it suffices to show that $Pr((x,y) \in K_\sigma) \lesssim_\eta \frac{1}{N}$, where the probability is taken over all sticky maps $\sigma: \mathscr{B}^{h(\mathscr{P})} \rightarrow \mathscr{P}$.

 Let $ q_1, \ldots, q_k, \ldots, q_{2^N}$ denote the $2^N$ vertices in $\mathscr{P}$ of height $h(\mathscr{P})$.  For each $1 \leq k \leq 2^{N}$, we let $0q_{k1}\ldots q_{k h(\mathscr{P})}$ denote the binary string of $q_k$.   Let $b_1, \ldots, b_l$ denote all of the vertices in $\mathscr{B}$ of height $h(\mathscr{P})$ such that, if $b_k$ is the string  $0 {b_{k}}_1 \ldots {b_{k}}_{h(\mathscr{P})}$, then for some $q_{n}$ there exists a  parallelogram $\rho_k$ that contains $(x,y)$ with longest sides of slope $\sum_{j = 1}^{h(\mathscr{P})}2^{-j}{q_{n}}_j$ and with corners at $(0, \sum_{j=1}^{h(\mathscr{P})}2^{-j}{b_k}_j)$ and \mbox{$(0, \sum_{j=1}^{h(\mathscr{P})}2^{-j}{b_{k}}_j + 2^{-h(\mathscr{P})}) $} and with a right vertical side on the line $x = \frac{1}{2\eta} + 1$.   Note $0 \leq l = l(\mathscr{P},x,y) \leq 2^N$.   We assume without loss of generality that $(x,y)$ does not lie on the boundary of this parallelogram and hence there is at most one parallelogram $\rho_k$ satisfying this property.   (The set of points lying on the boundaries of all parallelograms of this form is of measure 0.)

Let $g_0$ denote the splitting vertex of $\mathscr{P}$ of lowest height.  Let $g_{00}$ denote the splitting vertex of $\mathscr{P}$ of lowest height that is or is a descendant of one child of $g_{0}$ and we let  $g_{01}$ denote the splitting vertex of $\mathscr{P}$ of lowest height that is or is a descendant of the other child.    (The reader should keep in mind that, although $g_0$ is splitting, it is possible that one or neither of its immediate children are splitting, and for this argument we need to insure that $g_{00}$ and  $g_{01}$ lie on ``opposite sides of the family tree'' rooted at $g_0$.)
More generally suppose $g_{0a_1\ldots a_j}$ has been defined for $j \leq N-2$.  We let $g_{0a_1\ldots a_j 0}$ denote the splitting vertex of $\mathscr{P}$ of lowest height that is or is a descendant of one child  of $g_{0a_1\ldots a_j}$  and we let $g_{0a_1\ldots a_j 1}$ denote the splitting vertex of $\mathscr{P}$ of lowest height that is or is a descendant of the second child.  Note the heights of both $g_{0a_1\ldots a_j 0}$ and $g_{0a_1\ldots a_j 1}$ are greater than  the height of  $g_{0a_1\ldots a_j}$  and they do not have to be equal to each other.




We now consider a splitting vertex $g_{0r_1\ldots r_k}$ of $\mathscr{P}$.  The set of real numbers $t$ such that there exits a line passing through $(x,y)$ and $(0,t)$ with slope lying in the interval $g_{0r_1\ldots r_k}$ forms an interval $I_{g_{0r_1\ldots r_k}}$ of length less than $\frac{2}{\eta}$ times the length of the interval $g_{0r_1\ldots r_k}$.  This interval is in turn contained in a union of  at most $\frac{2}{\eta} + 1$ dyadic intervals of length that of $g_{0r_1\ldots r_k}$ all of which intersecting $I_{g_{0r_1\ldots r_k}}$.  We label those intervals in [0,1] that happen to contain any of the intervals $b_1, \ldots, b_l$  as $G^m_{g_{0r_1 \ldots r_k}}$ where $m$ is an index over a set that is possibly empty but could have integer values from $1$ to as large as $\frac{2}{\eta} + 1\leq \frac{4}{\eta}$.   Since $\mathscr{P}$ satisfies an $\eta$-separability condition, if $0s_1 \ldots s_j \neq 0t_1 \ldots t_j$ no interval  $G^m_{g_{0s_1 \ldots s_j}}$ for $1 \leq j \leq N-1$  can be an interval $G^n_{g_{0t_1 \ldots t_j}}$.

  Note that if $g_{0a_1 \ldots a_j a_{j+1}}$ is a descendant of $g_{0a_1 \ldots a_j}$, then any interval $G^m_{g_{0a_1 \ldots a_j a_{j+1}}}$ must be contained in an interval  $G^n_{g_{0a_1 \ldots a_j}}$ for some $n$.    Moreover, the interval $G^k_{g_{0a_1 \ldots a_j}}$ can contain at most $\frac{4}{\eta}$ intervals  $G^m_{g_{0a_1 \ldots a_j 0}}$  and at most $\frac{4}{\eta}$ intervals  $G^m_{g_{0a_1 \ldots a_j 1}}$. 
At this stage we recall that an $n$-ary tree is that same as a binary tree except that each vertex may have up to $n$ descendants.   For example, 3-ary (or ternary) trees are considered  in \cite{KB}.

 We define an $\frac{8}{\eta}$-tree $\mathscr{G}$ described as follows.  We assume without loss of generality that $G^k_{g_0}$ exists for at least one value of $k$, as otherwise $(x,y) \notin K_\sigma$ for every sticky map $\sigma: \mathscr{B}^{h(\mathscr{P})} \rightarrow \mathscr{P}$ automatically holds.  The root of $\mathscr{G}$ is the interval $[0,1]$.  Vertices of $\mathscr{G}$ of height $h$ are the intervals $G^k_{g_{0a_1 \ldots a_h}}$, and edges are placed between $[0,1]$ and the intervals $G^k_{g_0}$ and also between any $G^m_{g_{0a_1 \ldots a_j}}$ and any  $G^n_{g_{0a_1 \ldots a_j a_{j+1}}}$.   $\mathscr{G}$ has height $N-1$.

Note that the number of vertices in $\mathscr{G}$ of height $j$ is bounded by  $ \frac{4}{\eta} 2^j$. 

The $\eta$-separation condition on $\mathscr{P}$ manifests itself at this stage of the argument in a very important way.  Namely, \emph{there do not exist two intervals  $G^m_{g_{0a_1 \ldots a_j 0}}$, $G^n_{g_{0a_1 \ldots a_j 1}}$ that lie in the same \emph{half} of an interval  $G^k_{g_{0a_1 \ldots a_j}}$.}  The reason for this is that, letting $H$ denote a half of the interval $G^k_{g_{0a_1 \ldots a_j}}$, the sets $$\left\{(u,v) \in \mathbb{R}^2 : \frac{1}{2\eta} < u, v = m u + b, b \in H, m \in g_{0a_1 \ldots a_j 0}\right\}$$ and
$$\left\{(u,v) \in \mathbb{R}^2 : \frac{1}{2\eta} < u, v = m u + b, b \in H, m \in g_{0a_1 \ldots a_j 1}\right\}$$
are disjoint, and hence both cannot simulaneously contain the point $(x,y)$.   Figures \ref{ff} and \ref{fff} illustrate the associated disjointness.

\begin{figure}

\begin{tikzpicture}[scale=7]
\draw[thick] (0,0) -- (1,0);
\foreach \x in {0,0.5,1} \draw[thick] {(\x,0.5pt) -- (\x,-0.5pt)};
\draw (0.5,-1pt) node[below] {$I$};

\draw[(-, thick, black] (0.1875,0) -- (0.25,0);
\draw[-), thick, black] (0.1875,0) -- (0.25,0);

\draw[(-, thick] (0.625,0) -- (0.75,0);
\draw[-), thick] (0.625,0) -- (0.75,0);

\draw[<-, thick] (0.25,1.5pt) -- (0.625,1.5pt);
\draw[->, thick] (0.25,1.5pt) -- (0.625,1.5pt);
\draw (0.4375,2pt) node[above] {$\eta |I|$};
\end{tikzpicture}

\caption{  $\eta$-separation  condition}
\label{ff}
\end{figure}

\;\;\;\;\mbox{   }
\begin{figure}
\centering
\includegraphics[width=0.45\textwidth]{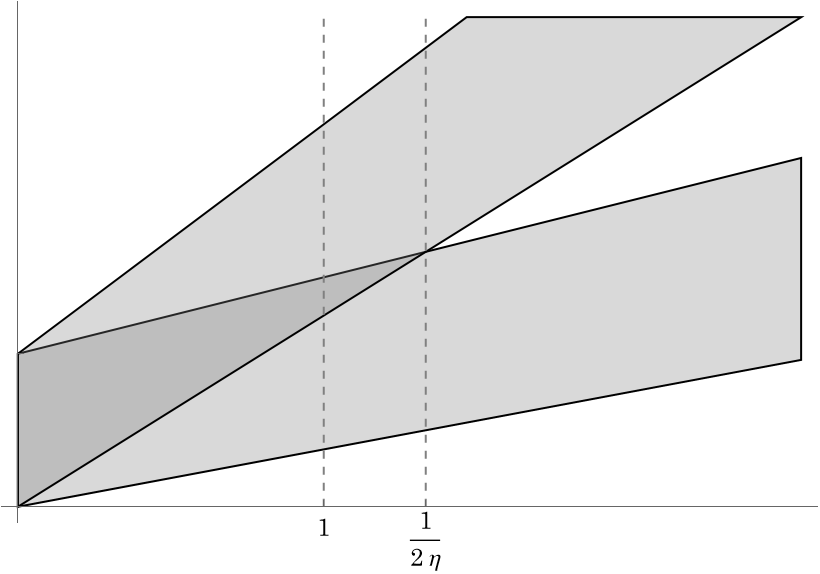}
\caption{ Disjointness past $x = \frac{1}{2\eta}$ associated to $\eta$-separation}
\label{fff}
\end{figure}

Accordingly, if  $\sigma: \mathscr{B}^{h(\mathscr{P})} \rightarrow \mathscr{P}$ is a randomly chosen sticky map, the probability that $(x,y) \in K_\sigma$ is bounded by the probability of a Bernoulli $(\frac{1}{2})$ percolation on $\mathscr{G}$, which is a subtree of an $\frac{8}{\eta}$-tree of height $N-1$ with at most  $ \frac{4}{\eta} 2^j$ vertices of height $j$.  Indeed, for $(x,y)$ to lie in the associated $K_\sigma$, a ``right choice'' for $\sigma$ must be made \emph{for a sequence of nested half-intervals associated to the vertices in a ray in $\mathscr{G}$, } a  right choice on a half-interval  associated to  a vertex in $\mathscr{G}$  being that, once made, there exists a sticky map $\psi: \mathscr{B}^{h(\mathscr{P})} \rightarrow \mathscr{P}$ agreeing with $\sigma$ on that vertex such that $(x,y) \in K_\psi$ and $K_\psi$ intersects the $y$-axis on an interval that is a subset of the interval associated to that vertex.   Note two choices for $\sigma$ are possible for every half-interval associated to a vertex in a ray in $\mathscr{G}$, but by the disjointness properties associated to the sets displayed in the previous paragraph at most one is a right one.   Note that, without the separation condition, the above disjointness property does not hold and there could be more than  one right choice.    As calculations similar to those found in \cite{KB, lyons1992} indicate (see also Exercise 5.52 (a) of \cite{lyonsperes}) this probability of this Bernoulli percolation is bounded by $\frac{C}{\eta} \frac{1}{N}$, and so the lemma holds.
\end{proof}

\section{An example}\label{s4}

Fix a natural number $N$ and $(x,y) \in \mathbb{R}^2$.  Let $\mathscr{T}$ be a pruned tree of bounded height $h(\mathscr{T})$ with lacunary order $N$.   We set $\textup{Pr}_\mathscr{T}(x,y)$ to be the probability over all sticky maps $\sigma: \mathscr{B}^{h(\mathscr{T})} \rightarrow \mathscr{T}$ that $(x,y) \in K_\sigma$.   In Claim 7(B) of \cite{Ba} it is asserted that $\textup{Pr}_{\mathscr{T}}(x,y) \lesssim \frac{1}{N}$ provided $(x,y) \in [1,2] \times [0,3]$, although this is not necessarily the case.   An example is provided by the following theorem.

\begin{thm}\label{t2}
Given a natural number $N$, there exists a pruned tree $\mathscr{P}$ of lacunary order $N$ and a point $(1,y)$ with $1 \leq y \leq 3/2$ such that $Pr_\mathscr{P}(1,y) = 1$.
\end{thm}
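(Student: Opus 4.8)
The plan is to first convert the probabilistic statement into a purely combinatorial one about sticky maps, exploiting that $x=1$ is precisely the distance at which the shear carrying a point $(0,b)$ along a line of slope $d$ to the vertical line $x=1$ is the translation $b\mapsto b+d$, an isometry. Concretely, the parallelogram $\rho_{\sigma,0j_1\ldots j_h}$ (with $h=h(\mathscr{P})$) passes through $\{0\}\times Q_{0j_1\ldots j_h}$ with longest sides of slope $d_{\sigma,0j_1\ldots j_h}$, so $(1,y)\in\rho_{\sigma,0j_1\ldots j_h}$ iff $y-d_{\sigma,0j_1\ldots j_h}\in Q_{0j_1\ldots j_h}$. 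For each leaf $\ell$ of $\mathscr{P}$ with slope $d_\ell$ (the left endpoint of $Q_\ell$) and with $y-d_\ell\in[0,1)$, let $j(\ell)$ be the unique source leaf of $\mathscr{B}^{h}$ whose dyadic interval contains $y-d_\ell$ (leaves with $y-d_\ell\notin[0,1)$ are irrelevant). Then, for all but finitely many $y$,
\[
(1,y)\in K_\sigma \iff \exists\,\ell:\ \sigma(j(\ell))=\ell .
\]
Since there are finitely many sticky maps, $\textup{Pr}_\mathscr{P}(1,y)=1$ is \emph{equivalent} to the assertion that every sticky map $\sigma\colon\mathscr{B}^{h(\mathscr{P})}\to\mathscr{P}$ satisfies $\sigma(j(\ell))=\ell$ for at least one $\ell$. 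The point $y$ now enters only through the finite list of ``critical'' pairs $\{(j(\ell),\ell)\}$, and the task is to choose $\mathscr{P}$ and $y$ so that this family cannot be simultaneously avoided.

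Second, I would produce $\mathscr{P}$ and $y$ by induction on $N$. For $N=1$ an explicit gadget already works: take the split at the root of a tree of height $2$, send the two branches to the \emph{inner} leaves $Q_{001}$ and $Q_{010}$ (slopes $1/4$ and $1/2$), and take $y=9/8$. The only randomness is the root coin (identity or swap), and one checks that the two critical source leaves are $011$ and $010$, giving pairs $(011,Q_{001})$ and $(010,Q_{010})$; under the identity choice the second fires and under the swap the first fires, so $\textup{Pr}_\mathscr{P}(1,9/8)=1$. The mechanism I want to isolate and propagate is this: both critical source leaves lie in the \emph{same} maximal source subtree on which $\sigma$ is forced to be constant (here the right half, which maps as a single unit below the one split), so the adversary cannot route them to different leaves, and whichever coin is chosen, one pair is met.

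For the inductive step I would realize the order-$N$ tree as a split $v_N$ over two pruned order-$(N-1)$ gadgets, inserting stems and placing each sub-gadget's leaves toward the inner end of its half, and choosing $y$ so that the critical source leaves attached to the newly created pairs again fall inside a single source block that the coin over $v_N$ routes as a unit. The verification would then run the Bernoulli $(\tfrac12)$ percolation estimate behind Lemma~\ref{l2} \emph{in reverse}: whereas $\eta$-separation forces at most one ``right choice'' at each splitting vertex, at $x=1$ — which lies to the left of the threshold $\tfrac{1}{2\eta}$ where separation first produces disjointness — the two sheared half-slabs overlap, so \emph{both} choices can be made right, the percolation survives to the bottom with probability one, and some critical pair is always met.

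The step I expect to be the main obstacle is the second half of the inductive placement together with the compounding of scales across the $N$ levels. Each split $v_k$ contributes a horizontal shear of size comparable to the slope gap of $Q_{v_k}$, and these gaps interact with the widths of the source blocks being routed; ensuring that the two routings stay overlapping on one common $y$ at \emph{every} level simultaneously is delicate, since placing sub-gadget leaves too far inward clusters all slopes (which is avoidable, e.g.\ by the ``identity'' sticky map) while placing them too far apart reintroduces a large top-level shear (again avoidable). The heart of the matter is thus to choose all stem lengths and inner-leaf positions at once so that the resulting nested system of overlaps is nonempty for every $N$ — equivalently, so that no sticky map, viewed as a height-preserving contraction of $\mathscr{B}^{h}$ onto $\mathscr{P}$, avoids all the critical pairs. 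This is exactly the quantitative point at which the claimed bound $\textup{Pr}_\mathscr{P}(1,y)\lesssim 1/N$ of Claim 7(B) fails.
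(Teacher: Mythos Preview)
Your reformulation at $x=1$ and the $N=1$ gadget are correct, and you have isolated the right mechanism: at each splitting vertex both routings must land in the \emph{same} half of the relevant source interval, so that neither coin flip can dodge a critical pair. But the proposal stops short of a proof. You explicitly label the inductive placement ``the main obstacle'' and then do not resolve it. The difficulty you name is genuine: if you glue two order-$(N-1)$ gadgets below a new split, the effective $y$ seen by each sub-gadget is determined by the global $y$ together with the slope offset at the new split, so the bare hypothesis ``there exists a $y$'' does not recurse. One would need a strengthened induction (say, that the admissible $y$'s form an interval of prescribed length and position relative to the gadget) and a careful accounting of the nested shears; none of that is supplied.

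The paper bypasses induction with a single self-similar construction. Define $\rho_1([a,b])=[\tfrac{a+b}{2}-\tfrac{b-a}{8},\tfrac{a+b}{2}]$ and $\rho_2([a,b])=[\tfrac{a+b}{2},\tfrac{a+b}{2}+\tfrac{b-a}{8}]$, let $\mathscr{P}$ be the tree whose height-$k$ vertices correspond to the intervals $\mathcal{I}_{0a_1\ldots a_k}=\rho_{a_k}\cdots\rho_{a_1}[0,1]$ for $0\le k\le N$, and take $y=1+\tfrac18+\cdots+(\tfrac18)^N-(\tfrac18)^{N+1}$. Writing $\mathcal{K}_{0a_1\ldots a_j}$ for the reflection of $\mathcal{I}_{0a_1\ldots a_j}$ translated by $\sum_{k=1}^j 8^{-k}$, the contraction ratio $1/8$ forces both $\mathcal{K}_{0a_1\ldots a_j1}$ and $\mathcal{K}_{0a_1\ldots a_j2}$ to sit in the right half of $\mathcal{K}_{0a_1\ldots a_j}$ for every $j$ --- precisely your ``both choices right'' condition, but verified at all levels simultaneously by self-similarity rather than by induction. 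For any sticky map $\sigma$ one then reads off recursively a string $k_1,\ldots,k_N$ (determined by where $\sigma$ sends the right half at each stage) for which $(1,y)$ lies in the parallelogram with slope the left endpoint of $\mathcal{I}_{0k_1\ldots k_N}$ and base $\{0\}\times\mathcal{K}_{0k_1\ldots k_N}$. The ``compounding of scales'' you worry about is absorbed automatically by the geometric series built into $y$.
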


\begin{proof}

We define the \emph{interval maps} $\rho_1$ and $\rho_2$ on the set of closed intervals in $\mathbb{R}$ of positive finite measure by the following:

$$\rho_1([a,b]) = \left[\frac{a + b}{2} - \frac{b-a}{8}, \frac{a + b}{2}\right]  \textup{;}$$
$$\rho_2([a,b]) = \left[\frac{a + b}{2}, \frac{a + b}{2} + \frac{b - a}{8}\right] \;.$$

 
Let $\mathcal{I}_0 = [0,1]$.  We set $\mathcal{I}_{01}= \rho_1\mathcal{I}_0$ and $\mathcal{I}_{02} = \rho_2\mathcal{I}_0$.    Similarly, for any sequence $a_1, a_2, \ldots, a_k$ of 1's and 2's, we let
$$\mathcal{I}_{0a_1a_2\ldots a_k} = \rho_{a_k}\rho_{a_{k-1}}\ldots \rho_{a_1}\mathcal{I}_0\;.$$
We let $m_{0a_1 a_2 \ldots a_k}  \in [0,1]$ denote the left hand endpoint of $ \mathcal{I}_{0a_1a_2\ldots a_k}$  .   Given $N$, let $\Omega_N$ be   the set of all points of the form $m_{0a_1 a_2 \ldots a_N} + 2^{-100N}$.   Let $\mathscr{P}$ be the pruned tree of lacunary order $N$ and of height $3N$ consisting of all vertices in $\mathscr{B}$ corresponding to the intervals $\mathcal{I}_{0a_1a_2\ldots a_k}$ for $0 \leq k \leq N$.  Note $\mathscr{P} =  \mathscr{T}_{\Omega_N}^{3N},$ the tree $\mathscr{T}_{\Omega_N}$ truncated at height $3N$.

Let now $y = 1 + \frac{1}{8} + (\frac{1}{8})^2 + \cdots + (\frac{1}{8})^{N} - (\frac{1}{8})^{N+1}$.   We will show that for \emph{every} sticky map $\sigma: \mathscr{B}^{3N} \rightarrow {\mathscr{P}}$ we have $(1,y) \in K_\sigma$, proving the desired result.   In particular, we will see that the probability of $(1,y)$ lying in $K_\sigma$ does not correspond to the survivor probability of  a  Bernoulli $(\frac{1}{2})$ percolation of a binary tree of height $N$, as rather for this particular value of $y$ each sticky map $\sigma$ provides a ``tournament'' for which exactly one slope in $\Omega_N$ is associated to a parallelogram  in $K_{\sigma}$ that contains $(1,y)$.   This is because, for every natural number $j$ and sequence $a_1a_2\ldots a_{j}$, the translates of both  $\mathcal{I}_{0a_1a_2\ldots a_j1}$ and $\mathcal{I}_{0a_1a_2\ldots a_j2}$ by $\frac{1}{8} + \dots + (\frac{1}{8})^{j+1}$ lie in the \emph{same half} of a dyadic interval in $[0,1]$ whose length is the same as that of the interval $\mathcal{I}_{0a_1a_2\ldots a_{j}}$.  
\\

We now discuss the above remarks in detail.   It will be helpful to associate to any set $\mathcal{S}$ in $[0,1]$ its reflection across $\frac{1}{2}$ that we denote by $rf (\mathcal{S})$; in particular 
$$rf (\mathcal{S}) = \{1 - x : x \in \mathcal{S}\}\;.$$  

 We will associate to each $\mathcal{I}_{0a_1a_2\ldots a_k}$ its reflection  $\mathcal{J}_{0a_1a_2\ldots a_k}$ given by
$$\mathcal{J}_{0a_1a_2\ldots a_k} = rf(\mathcal{I}_{0a_1a_2\ldots a_k})\;.$$

If $h \in \mathbb{R}$ and $\mathcal{S} \subset \mathbb{R}$, we define the translate $\tau_h \mathcal{S}$ by $\chi_{\tau_h \mathcal{S}}(x) = \chi_\mathcal{S}(x-h)$.
\\

If a line with slope lying in $\mathcal{S} \subset [0,1]$ intersects the point $(1,1)$, then that line intersects the $y$-axis at a point $(0,t)$ where $t \in rf(\mathcal{S})$. If a line with slope lying in $\mathcal{S} \subset [0,1]$ intersects the point $(1,1+h)$, then that line intersects the $y$-axis at a point $(0,t)$ where $t$ lies in the set $h + rf(\mathcal{S}) = \tau_h rf(\mathcal{S}) $.    Note that for each interval $\mathcal{I}_{0a_1\ldots a_N}$ there exists a parallelogram with slope $m_{0a_1\ldots a_N}$ whose left hand side is the set $\{(0, t) : t\in \mathcal{J}_{0a_1 \ldots a_N}\}$ and such that $(1,1)$ is on the upper edge. Note that consequently there exists a parallelogram with slope $m_{0a_1\ldots a_N}$  whose left hand side is the vertically oriented interval $\{(0, t) : t\in \tau_{-1 + y + (\frac{1}{8})^{N+1}}\mathcal{J}_{0a_1 \ldots a_N}\}$ that contains the point $(1,y)$ at a distance $(\frac{1}{8})^{N+1}$ vertically below its top edge.  In Bateman's terminology in \cite{Ba}, it is these $2^N$ intervals that are associated to the possible set $\textup{Poss}(1,y)$ and the associated tree $\langle(1,y)\rangle$.   These intervals being  important enough to us to give them a name, we let $$\mathcal{K}_{0a_1 \ldots a_N} = \tau_{-1 + y + (\frac{1}{8})^{N+1}}\mathcal{J}_{0a_1 \ldots a_N}\;.$$
\\
We also define for any string $a_1 \ldots a_j$ of $1$'s and $2$'s for $1 \leq j \leq N$ the set $$\mathcal{K}_{0a_1a_2\ldots a_j} = \tau_{
 \sum_{k=1}^j 2^{-3k}}\mathcal{J}_{0a_1a_2\ldots a_j}\;.$$

Note that for $1 \leq j \leq N-1$  \emph{both} $\mathcal{K}_{0a_1a_2\ldots a_j 1}$ and $\mathcal{K}_{0a_1a_2\ldots a_j 2}$ lie on the right half of $\mathcal{K}_{0a_1a_2\ldots a_j}$\;.
\\

Our choice of $\Omega_N$ and $y$ gives the possible set $\textup{Poss}(1,y)$ a particular structure that we now wish to exploit.
\\


  Let us now fix a sticky map $\sigma: \mathscr{B}^{3N} \rightarrow {\mathscr{P}}$ .   We need to show that $(1,y)$ must lie in $K_\sigma$.  Readers familiar with Bateman's terminology might at this point observe that since $[0,1]$ and all intervals of the form $\mathcal{I}_{0a_1a_2\ldots a_j}$ for $1 \leq j \leq N-1$ correspond to splitting vertices of $\mathscr{P}$,  we have that $[0,1]$ and intervals of the form $\mathcal{K}_{0a_1a_2\ldots a_j}$ for $1 \leq j \leq N-1$ are associated to choosing vertices of $\langle(1,y)\rangle$.  
  
   Associated to $\sigma$ will be the string of 1's and 2's that are defined recursively as follows.
   
   Let $k_1$ be such that $\sigma[\frac{1}{2}, 1] \supset \mathcal{I}_{0 k_1}\;.$ Note that $\sigma(\mathcal{K}_{0k_1}) = \mathcal{I}_{0k_1}$ because $  {\mathscr{P}}$ has no splitting vertices between $\mathcal{I}_0$ and $\mathcal{I}_{0 k_1}$.
   
   Assuming $k_1, \ldots, k_j$ are determined, let $k_{j+1}$ be the value such that
    $$\sigma(\textup{right half }(\mathcal{K}_{0k_1k_2\ldots k_j})) \;\supset\; \mathcal{I}_{0k_1k_2\ldots k_{j+1}}\;.$$
    Note that $\sigma(\mathcal{K}_{0k_1k_2\ldots k_{j+1}}) = \mathcal{I}_ {0k_1k_2\ldots k_{j+1}}$ since there are no splitting vertices in $ {\mathscr{P}}$ between $
 \mathcal{I}_ {0k_1k_2\ldots k_{j}}$ and $\mathcal{I}_ {0k_1k_2\ldots k_{j+1}}$.
 
We have that $(1,y)$ lies in the parallelogram in $K_\sigma$ with slope $m_{0k_1\ldots k_N}$ whose left hand side is the vertically oriented interval $\{(0,t) : t \in \mathcal{K}_{0k_1\ldots k_N}\}$.
\end{proof}

\section{Concluding remarks}\label{s5}
Even though Theorem \ref{t2} provides a counterexample to a step in the proof of Theorem 1 of \cite{Ba}, it does not provide a counterexample to Theorem 1 of \cite{Ba} itself as the union of all of the trees $\mathscr{T}_{\Omega_j}$ in the proof of Theorem \ref{t2} contains, for every natural number $N$, a subtree that is $\frac{1}{8}$-separated and lacunary of order $N$.
\\

Theorem 1 of \cite{Ba} was used as a key step in the proofs of Theorems 2,3, and 4 of \cite{h2013} and Theorem 4 of \cite{hs2}.   Claim 7(B) of \cite{Ba} was used in the proofs of Theorems 2.1 and 2.2 of \cite{gauvannyj2024} and Theorem 1 of \cite{hs2022}\;.  Researchers in the area should consider these results at the moment to be at best provisional.
\\


It is important to recognize that a set $\Omega \subset [0,1]$ may have infinite lacunary value yet be such that for no $\eta > 0$ does $\mathscr{T}_\Omega$ contain an $N$-lacunary $\eta$-separated subtree for every $N$.    Such a set may be defined as follows.

Let $j \in \mathbb{N}$.   We define the interval maps $\rho_{j,1} , \rho_{j,2}$ respectively on the set of closed intervals in $\mathbb{R}$ of finite length by

$$\rho_{j,1}([a,b]) = \left[\frac{a + b}{2} - 2^{-j}(b-a), \frac{a + b}{2}\right]\;,$$
$$\rho_{j,2}([a,b]) = \left[\frac{a + b}{2} , \frac{a + b}{2} + 2^{-j}(b-a)\right]\;.$$

Let $\Omega \subset [0,1]$ be the set of all $x$ contained  in infinitely many intervals of the form

$$\rho_{1,i_1}\rho_{2,i_2}\cdots \rho_{k,i_k}[0,1]\;.$$

One can check that $\lambda(\Omega) = \infty$.   However, there is no $\eta > 0$ for which  $\mathscr{T}_\Omega$ contains an $N$-lacunary $\eta$-separated subtree for every $N$. 
\\

This set $\Omega$ constructed here poses a model problem for the theory of geometric maximal operators.  Is the geometric maximal operator $M_\Omega$ bounded on $L^p(\mathbb{R}^2)$ for every $1 < p  \leq \infty$\;?

\end{document}